\theoremstyle{plain}
\newtheorem{theorem}{Theorem}[section]
\newtheorem{proposition}[theorem]{Proposition}
\newtheorem{lemma}[theorem]{Lemma}
\theoremstyle{definition}
\theoremstyle{remark}
\def\P{\mathbb P}
\def\O{\mathcal O}
\def\H{\mathcal H}
\def\J{\mathcal J}
\def\Hom{{\rm Hom}}
\def\W{\mathcal W}
\def\Ext{{\rm Ext}}
\newcommand{\Coh}{\mathcal Coh}
\begin{document}

\title[Finite local systems in the Drinfeld-Laumon construction]{Finite local systems in the Drinfeld-Laumon construction}

\author{Galyna Dobrovolska}

\begin{abstract}

Let $E$ be a local system on a smooth projective curve of genus $g$ with monodromy given by a representation of the symmetric group corresponding to a Young diagram with rows of lengths $n_1,n_2,...$ where $n_1 > n_2 + (2g-2)$, $n_2 > n_3 + (2g-2)$, ..., $n_{k-1} > n_k + (2g-2)$, $n_k > n_{k+1} + n_{k+2} + ... + (2g-2)$. We show that the result of $k$ steps of the Drinfeld-Laumon construction applied to $E$ is the IC sheaf of the Harder-Narasimhan stratum with subquotients of rank $1$ and degrees $n_1$, $n_2$, ..., $n_k$, $n_{k+1}+n_{k+2}+...$ with coefficients in a local system with monodromy given by the Young diagram with rows $n_{k+1}$, $n_{k+2}$, $...$

\end{abstract}

\maketitle

\section{Introduction}

Let $C$ be a smooth projective curve of genus g and let $E$ be a local system on $C$.
Consider the natural symmetrization map $sym: C^n \to C^{(n)}$, where $C^{(n)}$ is the $n$-th symmetric power of the curve $C$. 
The symmetrization $(sym_* (E^{\boxtimes n}))^{S_n}$ of the local system $E$ restricted to  the locus of $n$-tuples of unordered distinct points $C_{dist}^{(n)}$
gives rise to a local system on the moduli stack $\Coh^n_{dist, 0}$ of coherent sheaves of rank $0$ and length $n$ supported at $n$ distinct points  via the natural map from $C_{dist}^{(n)}$ to $\Coh^n_{dist, 0}$. Extending this local system to the stack $\Coh^n_0$ of coherent sheaves of rank $0$ and length $n$ from the open substack $\Coh_{dist, 0}$ by the Goresky-MacPherson extension gives the Springer-Laumon sheaf $W_E$ on $\Coh_0$.
We are interested in the case of the trivial local system $E$ of rank $k$.

While proving the geometric Langlands conjecture, Gaitsgory proved that for an irreducible local system $E$ of rank $k$
on $C$ the result of the Drinfeld-Laumon construction (see Section 3.1) applied to $W_E$ after $k$ steps descends to the stack $Coh_{k+1}$ of coherent sheaves of rank $k+1$.
By analogy (due to D. Arinkin), we expect that the same should be true when we do this for the trivial local system $E$ of rank $k$.
Note that in this case the Springer-Laumon sheaf is constructed out of the symmetrization of the trivial local system of rank $k$.
By Schur-Weyl duality this symmerization is a direct sum of local systems corresponding to Young diagrams
with at most $k$ rows.
Hence we expect that after $k$ steps of the Drinfeld-Laumon construction are applied to the local system associated
to a Young diagram with at most $k$ rows the resulting sheaf will descend to $Coh_{k+1}$. We confirm such an expectation by calculating below the result of the Drinfeld-Laumon construction applied to the sheaf $W_{\rho}$ on $\Coh_0^{n}$, corresponding to a representation $\rho$ of the symmetric group $S_n$, which is a direct summand of the Springer-Laumon sheaf corresponding to the trivial local system of rank $n$. 


On $\Coh_0^{n}$ we consider the sheaf $W_{\rho}$ corresponding to a representation $\rho$ of the symmetric group $S_n$ which is a direct summand of the Springer-Laumon sheaf corresponding to the trivial local system of rank $n$. Consider further the open substack $_{\rm inj }\Coh_1^{\prime, n}$ of the stack $\Coh_1^{\prime, n}$ the points of which are sheaves $M_n$ of rank $1$ with a section $\O \to M_n$ such that the section is an injective map of sheaves. We have a natural smooth map $_{\rm inj }\Coh_1^{\prime, n} \to \Coh_0^{n}$ which sends the pair $\O \to M_n$ to the cokernel of the section. On $\Coh_1^{\prime, n}$ we consider the sheaf $\W_{\rho}$ which is the IC extension from $_{\rm inj }\Coh_1^{\prime, n}$ of the pullback of $W_{\rho}$ to $_{\rm inj }\Coh_1^{\prime, n}$ via the above smooth map. 

Consider a Harder-Narasimhan stratum $\widetilde S$ in the stack $\Coh_{k+1}^{n+k(k+1)(g-1)}$ of coherent sheaves of rank $k+1$ and degree $n+k(k+1)(g-1)$, the points of which are sheaves of rank $k+1$ and degree $n+k(k+1)(g-1)$ such that all the summands in their Harder-Narasimhan filtration have rank $1$ and fixed distinct degrees, and let the summand of the smallest degree in the Harder-Narasimhan filtration have degree $m+k(2g-2)$. Consider the Harder-Narasimhan stratum $S$ in the stack $\Coh_{k+1}^{\prime, n+k(k+1)(g-1)}$ (the moduli stack of coherent sheaves $M_{k+1}$ of rank $k+1$ and degree $n+k(k+1)(g-1)$ with a section $\Omega^k \to M_{k+1}$) which is the preimage of $\widetilde S$ under the map $\Coh_{k+1}^{\prime, n+k(k+1)(g-1)} \to \Coh_{k+1}^{n+k(k+1)(g-1)}$ which forgets the section. Let $\mu$ be a Young diagram with content $m$. Consider the substack $_{\rm inj}S$ of $S$ for which the composition of the section and the projection onto the Harder-Narasimhan summand of the smallest degree is injective. There is a smooth map from the substack $_{\rm inj}S$ to $\Coh_0^m$ which sends a sheaf with a section to the quotient of the smallest Harder-Narasimhan summand by the section. Let $\W_{S,\mu}$ be the IC extension from $_{\rm inj}S$ to $\Coh_{k+1}^{\prime, n+k(k+1)(g-1)}$ of the pull-back to $_{\rm inj}S$ of the sheaf $W_{\mu}$ on $\Coh_0^m$

For a Young diagram $\rho$ with content $n$ and rows of lengths $n_1 > n_2 + (2g-2)$, $n_2 > n_3 + (2g-2)$, ..., $n_{k-1} > n_k + (2g-2)$, $n_k > n_{k+1} + n_{k+2} + ... + (2g-2)$. Let the Young diagram $\mu$ be obtained from $\rho$ by deleting from it the $k$ longest rows. Let $S$ be the Harder-Narasimhan stratum in $\Coh_{k+1}^{\prime,n+k(k+1)(g-1)}$ consisting of $\Omega^k \to M_{k+1}$ such that $M_{k+1}$ has the Harder-Narasimhan filtration with summands of rank $1$ and degrees $n_1, n_2 + (2g-2), n_3 + 2(2g-2), ... , n_k + (k-1)(2g-2), (n_{k+1} + n_{k+2} + ...) + k(2g-2)$. We prove

\begin{theorem}[Main Theorem] 
The result of the $k$-th step of the Drinfeld-Laumon construction applied to $\W_{\rho}$ is the sheaf $\W_{S,\mu}$ on $\Coh_{k+1}^{\prime,n+k(k+1)(g-1)}$.
\end{theorem}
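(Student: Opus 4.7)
The plan is to argue by induction on $k$. The base case $k=0$ is essentially tautological: with no rows removed we have $\mu = \rho$, and the Harder-Narasimhan ``stratum'' in $\Coh_1^{\prime, n}$ is the whole stack, so $\W_{S,\mu}$ coincides with $\W_\rho$ by definition. At each inductive step one applies one further iteration of the Drinfeld-Laumon construction, removes one more row of the Young diagram, identifies the HN stratum that appears in the new rank, and checks that the iterated image is the claimed IC extension.

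For the inductive step I would analyze the one-step Drinfeld-Laumon correspondence
\[
\Coh_j^{\prime} \xleftarrow{\;p\;} Z_j \xrightarrow{\;q\;} \Coh_{j+1}^{\prime},
\]
where $Z_j$ parametrizes objects of $\Coh_{j+1}^{\prime}$ together with a quotient line bundle of the underlying coherent sheaf. On the open injective locus of the target HN stratum --- where the composition of the section with projection onto the new top HN summand is injective --- the correspondence becomes transparent, and smooth base change identifies the restriction of the pushforward with the pullback of the Springer-Laumon sheaf associated to the Young diagram obtained by deleting the rows extracted so far. The gap condition $n_j > n_{j+1} + (2g-2)$ is precisely what guarantees that the candidate degree sequence $n_1, n_2+(2g-2), \ldots, n_j + (j-1)(2g-2), \ldots$ is strictly decreasing, so that the newly extracted line bundle truly becomes the top HN summand.

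Promoting this generic computation to an equality of IC sheaves would rest on the standard package: smooth pullback preserves IC, proper pushforward along a small map preserves IC, and the decomposition theorem controls pushforward along semismall maps. The codimension estimates needed to verify that no extraneous summands appear --- either on deeper HN strata or on the complement of the injective locus inside $\widetilde S$ --- should follow from the gap hypotheses, which control the relative dimensions of the fibres of $p$ and $q$. The last iteration is distinguished by the stronger hypothesis $n_k > n_{k+1} + n_{k+2} + \cdots + (2g-2)$ because at that stage what is being extracted sits above the \emph{aggregate} remainder of total degree $n_{k+1} + n_{k+2} + \cdots$ rather than above a single row, so the inequality is exactly what makes the newly produced rank-one summand the top piece of the HN filtration.

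The main obstacle I anticipate is the analysis on the boundary of the injective locus inside $S$, where the section $\Omega^k \to M_{k+1}$ fails to embed into the smallest HN summand. One must show that the Drinfeld-Laumon pushforward acquires no extra perverse summands supported on this boundary or on other HN strata; this is where the sharpness of the gap conditions will be used most crucially. A closely related technical point is that $W_\mu$ on $\Coh_0^m$, although by construction a direct summand of the symmetrization of a trivial local system, must genuinely be shown to behave as an IC sheaf under the relevant pullbacks and to pull back correctly under the cokernel map from $_{\rm inj}S$. Establishing the required codimension and base-change statements and then assembling them into the equality of IC sheaves is where I expect the bulk of the work to live.
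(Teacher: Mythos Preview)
Your outline has a genuine gap, and the paper follows a quite different route.

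First, the one-step Drinfeld--Laumon construction is a Fourier--Deligne transform along the dual vector bundles $\pi_j:\Coh_j^{\prime}\to\Coh_j$ and $\pi_j^{\vee}:{}^{0}\Coh_{j+1}^{\prime}\to\Coh_j$, followed by an intermediate extension; the intermediate data is an extension $\Omega^j\hookrightarrow M_{j+1}\twoheadrightarrow M_j$, so the quotient has rank $j$, not rank $1$. It is not a proper pushforward along a correspondence of the type you describe, and the small/semismall decomposition-theorem machinery you plan to use does not directly control a Fourier transform. You offer no concrete mechanism---beyond the phrase ``codimension estimates''---to exclude extra perverse summands, and the paper establishes no smallness result: Section~6 shows only that the DL output is a \emph{direct summand} of a pushforward from an auxiliary flag stack $F$, which bounds the support but does not identify the sheaf.

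What the paper actually does is a double induction on the Young diagram (increasing in the number of boxes off the longest row, decreasing in the length of the longest row), not an induction on $k$, and the engine is the Hecke functor. One computes $T_1\W_\rho$ by Pieri's rule (Section~3.1), computes $T_{k+1}\W_{S,\mu}$ directly on the HN stratum (Section~3.2), and shows that DL commutes with $T$ up to summands annihilated by the inverse construction $R_k$ (Section~4). The obstacle you correctly anticipate---ruling out contributions on the boundary of the injective locus and on deeper strata---is then handled not by codimension counting but by proving that the candidate $\W_{S,\mu}$ is \emph{Whittaker nondegenerate}: Section~5 exhibits an explicit nilpotent Higgs field in the conormal to the HN stratum and verifies that the resulting flag lies in Laumon's common open locus, so $\W_{S,\mu}$ cannot be killed by $R_k$. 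A direct induction on $k$ of the kind you propose does appear in the paper (the lemma opening Section~7), but only under the much stronger hypothesis that the longest row exceeds the \emph{sum} of all remaining rows; the Hecke-functor induction is precisely what bridges from that special case to the general gap condition $n_j>n_{j+1}+(2g-2)$. Without the Hecke/nondegeneracy package, your plan has no tool to complete the identification of the IC sheaf.
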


\subsection*{Main Theorem of \cite{Do} as a Special Case of the Main Theorem of this paper}

In the case of $k=1$ the main result is equivalent to the main result of \cite{Do}. Note that the sheaf $W_{\rho}$ restricts to a sheaf of the form ${\bf L}_{\rho}^C$ on the fibers of the map $\Coh_1^{\prime} \to \Coh_1$ which are of the form $H^0(M)$ for $M \in \Coh_1$.  Note also that the cones over the secant varieties in $H^0(M)^*$ are the intersections of the fibers of the map $^{0}\Coh_2^{\prime} \to Coh_1$ (the map from the stack of rank two coherent sheaves with an injective section to the stack of rank one sheaves by taking the cokernel of the section) with the closures of the preimages under the map $^{0}{\rm Bun}_2^{\prime} \to {\rm Bun}_2$ of the Harder-Narasimhan strata in ${\rm Bun}_2$, as shown below. 

For a curve $C$ and a line bundle $M$ on $C$, we have the map $C \to \P( H^0(M)^*)= \P Ext^1 (M,K)$ 
given by the line bundle $M$. It can be described as follows. For a point $x \in C$ we consider a map 
$f_x: M \to \O_x$ with kernel $M(-x)$ (this map is unique up to scaling). We pull back the 
extension $0 \to K \to K(x) \to \O_x \to 0$ under the map $f_x$. This is the element $h_x$ in $\P Ext^1(M,K)$
corresponding to $x$. Notice that $h_x|_{M(-x)}=0$. For points $x_1,...,x_s$ on the curve $C$ we 
also have $\Sigma a_i h_{x_i}|_{M(-x_1-...-x_s)}=0$; this translates into the statement that a point 
representing a bundle in $\P Ext^1(M,K)$ lies on the $s$-th secant variety if and only if it has a subbundle
of degree $n-s$, which for rank $2$ bundles is the description of a Harder-Narasimhan stratum.



\subsection*{Organization of the paper}

In Section 2 we introduce the Drinfeld-Laumon construction and Hecke functors.
In Section 3 we study how Hecke functors act on components of the Springer-Laumon sheaf and on IC sheaves of Harder-Narasimhan strata. 
In Section 4 we show compatibility of the Drinfeld-Laumon construction with Hecke functors.
In Section 5 we show Whittaker nondegeneracy of IC sheaves of Harder-Narasimhan strata filtered by line bundles.
In Section 6 we calculate the support of the Drinfeld-Laumon construction applied to a component of the Springer-Laumon sheaf.
Finally, in Section 7 we prove the Main Theorem by induction on the Young diagram.  

\subsection*{Acknowledgements}
The author is very grateful to Roman Bezrukavnikov for suggesting this problem and for many useful discussions and ideas. The author was supported by an NSF Postdoctoral Fellowship. 

\section{Background}

For a smooth projective curve $C$ we can consider the stack $\Coh_k(C)$ of coherent sheaves of rank $k$ on $C$ and the stack $\Coh_k^m(C)$ of coherent sheaves of rank $k$ and degree $m$ on $C$. Let $\Omega$ be the canonical line bundle of $C$.
We can also consider the stack $\Coh_k^{\prime}(C)$ of coherent sheaves on $C$ of rank $k$ with a section $\Omega^{k-1} \to M_k$ and the stack $\Coh_k^{\prime, m}(C)$ of coherent sheaves on $X$ of rank $k$ and 
degree $m$ with a section $\Omega^{k-1} \to M_k$. Similarly we define $^{0}\Coh_k^{\prime}(C)$ (resp. $^{0}\Coh_k^{\prime,m}(C)$) to be the stack of coherent sheaves on $C$ of rank $k$ (resp. of rank $k$ and degree $m$) with a section $\Omega^{k-1} \hookrightarrow M_k$ which is an injective map of sheaves. Fixing $C$, sometimes we will abbreviate $\Coh_k = \Coh_k(C)$, $\Coh_k^{m} =\Coh_k^m(C)$, $\Coh_k^{\prime} = \Coh_k^{\prime}(C)$, $\Coh_k^{\prime,m} = \Coh_k^{\prime, m}(C)$, $^{0}\Coh_k^{\prime} = ^{0}\Coh_k^{\prime}(C)$, and $^{0}\Coh_k^{\prime,m} = ^{0}\Coh_k^{\prime,m}(C)$. 

\subsection*{The Drinfeld-Laumon Construction}

We have a map $\pi_k: \Coh_k^{\prime,m} \to \Coh_k^m$ which sends $\Omega^{k-1} \to M_k$ to $M_k$. We also have a map $\pi^{\vee}_k:\ ^{0}\Coh_{k+1}^{\prime,m+(2g-2)k} \to \Coh_k^m$ which sends $\Omega^k \hookrightarrow M_{k+1}$ to $M_{k+1}/\Omega^k$. For these maps to be dual vector bundles we need to take an open substack of $\Coh_k^m$ such that the fibers of the maps $\pi_k$ and $\pi^{\vee}_k$ have constant dimension over it. Therefore we introduce the open substack  $_{(2g-2)k}\Coh_k^m$ of $\Coh_k^m$ consisting of $\Omega^{k-1} \to M_k$ where $M_k$ satisfies $\Hom(M_k, \Omega^k) =0$ which has the above property. We denote by $_{(2g-2)k}^{0}\Coh_k^{\prime,m}$ the stack of $\Omega^{k-1} \hookrightarrow M_k$ such that $\Hom(M_k, \Omega^k) =0$ and by $_{(2g-2)k}\Coh_k^{\prime,m}$ the stack of $\Omega^{k-1} \to M_k$ such that $\Hom(M_k, \Omega^k) =0$. 


Let $\Phi_k: D^b_c(_{(2g-2)k}\Coh_k^{\prime,m}) \to D^b_c((\pi_k^{\vee})^{-1}(_{(2g-2)k}\Coh_k^m))$ be the Fourier-Deligne transform for the dual vector bundles $\pi_k$ and $\pi^{\vee}_k$. Suppose that we are given a sheaf ${\mathcal F}_1$ on $\Coh_1^{\prime}$. Define inductively ${\mathcal F}_{k+1} = (i_{k+1})_{!*}\Phi_k({\mathcal F}_k)$. We call ${\mathcal F}_{k+1}$ the result of applying $k$ steps of the Drinfeld-Laumon construction (which we sometimes abbreviate as the DL construction) to ${\mathcal F}_1$. For more information about the Drinfeld-Laumon construction we refer the reader to \cite{La}, \cite{La1}, or \cite{G}.

\subsection*{Hecke Functors}

We fix a point $x$ on $C$ and consider the Hecke correspondence $\H=\H^m_k$ the points of which are $M_k \subset M_k^{\prime}$ such that $M_k$ and $M_k^{\prime}$ have rank $k$ and ${\rm deg}(M_k)=m$, ${\rm deg}(M_k^{\prime})=m+1$. We have the maps $q: \H \to \Coh_k^{m+1}$ and $p: \H \to \Coh_k^m$ which send $M_k \subset M_k^{\prime}$ to $M_k^{\prime}$ and to $M_k$ respectively. The Hecke functor $T_k$ is defined as $p_! q^*$.

We also consider the Hecke correspondence $\J=\J^m_k$ for sheaves with sections the points of which are $\Omega^{k-1} \to M_k \subset M_k^{\prime}$ such that $M_k$ and $M_k^{\prime}$ have rank $k$ and ${\rm deg}(M_k)=m$, ${\rm deg}(M_k^{\prime})=m+1$. We have the maps $q: \H \to \Coh_k^{m+1}$ and $p: \H \to \Coh_k^m$ which send $\Omega^{k-1} \to M_k \subset M_k^{\prime}$ to $\Omega^{k-1} \to M_k^{\prime}$ and to $\Omega^{k-1} \to M_k$ respectively. The Hecke functor $T_k$ for sheaves with a section (which is the one we will use here) is defined as $p_! q^*$.

Note that both kinds of Hecke functors as well as their properties are described in detail in \cite{La}.

\section{Action of the Hecke functors}

We will prove the theorem by induction on the Young diagram with the help of the Hecke functors. For this we
need to study the Hecke functors and apply them to our sheaves as described below.

\subsection{Hecke Functor on $\Coh_1^{\prime}$} \label{hecke_1}

In this subsection we calculate the Hecke functor $T_1$ applied to the sheaf $\W_{\rho}$ on $\Coh_1^{\prime, n}$ (see the proposition below). To do this, we first calculate the Hecke functor $T_0$ applied to the sheaf $W_{\rho}$ on $\Coh_0^{n}$ in a way which is similar to a calculation in \cite{La}; then we use a lemma in \cite{La} to relate the Hecke functor on $\Coh_0^{n}$ with the Hecke functor on $\Coh_1^{\prime, n}$.

\begin{proposition} The functor $T_1$ sends the sheaf $\W_{\rho}$ to the sheaf $\W_{\rm Res (\rho)}$ where $Res (\rho)$ is the restriction of the representation $\rho$ of $S_n$ to $S_{n-1}$.
\end{proposition}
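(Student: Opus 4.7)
The approach, as the paragraph preceding the statement sketches explicitly, is to first compute $T_0$ applied to $W_\rho$ on $\Coh_0^n$, and then transfer the result to $\Coh_1^{\prime,n}$ by a compatibility lemma of Laumon.

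First I would work on $\Coh_0^n$. Fix the Hecke point $x \in C$ and let $U \subset \Coh_0^{n-1}$ denote the open substack of sheaves supported at $n-1$ distinct points, none of which equals $x$. Over $U$ the Hecke correspondence $\H$ at $x$ degenerates to a trivial covering: an $M_0 \in U$ has exactly one corank-one overlay $M_0 \subset M_0'$ with $M_0'/M_0 \cong k_x$, namely $M_0' = M_0 \oplus k_x$. Hence $T_0 = p_! q^*$ is, at the level of symmetric powers, simply the map $D \mapsto D + x$ from $C^{(n-1)}_{\rm dist}$ into $C^{(n)}$. Pulling back $W_\rho$ along this map and computing fiberwise identifies $T_0(W_\rho)|_U$ with the local system associated to the representation ${\rm Res}_{S_{n-1}}^{S_n}\rho$, where $S_{n-1}$ is realized as the stabilizer in $S_n$ of the coordinate equal to $x$. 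This yields $T_0(W_\rho)|_U = W_{{\rm Res}(\rho)}|_U$; the calculation is modeled closely on \cite{La}.

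Next I would check that the equality propagates to all of $\Coh_0^{n-1}$. Both sides are perverse: $W_{{\rm Res}(\rho)}$ is a middle extension by definition, and $T_0(W_\rho)$ is as well, since the two projections of the Hecke correspondence at a single point are smooth and proper, which ensures that $T_0$ preserves the IC property of the Springer-Laumon sheaf (cf.\ \cite{La}). Hence $T_0(W_\rho) = W_{{\rm Res}(\rho)}$ on all of $\Coh_0^{n-1}$. Finally I would transfer the identification to $\Coh_1^{\prime,n}$. By construction $\W_\rho$ is the IC extension of the pullback of $W_\rho$ along the smooth cokernel map ${}_{\rm inj}\Coh_1^{\prime,n} \to \Coh_0^n$. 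A lemma of Laumon exhibits the Hecke correspondence $\J$ on $\Coh_1^{\prime}$ as the base change of $\H$ along this cokernel map, so that $T_1$ commutes with cokernel pullback. Combining with the previous step yields $T_1(\W_\rho) = \W_{{\rm Res}(\rho)}$ on the injective locus, and hence by IC extension on all of $\Coh_1^{\prime,n-1}$.

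The main obstacle is the IC preservation in the middle step. The fiberwise symmetric-group computation handles only the generic stratum of distinct points; propagating the equality across the diagonals depends on the fact that $W_\rho$ is by design a genuine IC sheaf built from a clean symmetrization, together with the smoothness of the Hecke correspondence at a single fixed point. A secondary technical point is invoking Laumon's compatibility lemma in the precise form needed here, since $\W_\rho$ is obtained by IC extension off the injective locus; one must confirm that this extension interacts properly with the base change of the Hecke correspondence, rather than merely with its restriction to the injective stratum.
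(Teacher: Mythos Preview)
Your outline matches the paper's: compute $T_0$ on the open locus of distinct points, extend by an IC argument, then transfer to $\Coh_1^{\prime}$ via Laumon's Lemma 4.2(i). The generic computation and the final transfer step are fine. The gap is in the middle step, where you assert that ``the two projections of the Hecke correspondence at a single point are smooth and proper, which ensures that $T_0$ preserves the IC property.'' This is false on $\Coh_0$. For the projection $q$ sending $(M_0 \subset M_0')$ to $M_0'$, the fiber over $M_0'$ is $\P(\Hom(M_0',\O_x))$, whose dimension jumps with the number of Jordan blocks of $M_0'$ at $x$; likewise the fiber of $p$ over $M_0$ is governed by $\Ext^1(\O_x,M_0)$, which vanishes when $x \notin \mathrm{supp}(M_0)$ and is positive-dimensional otherwise. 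So neither map is smooth, and even if they were, smooth plus proper would only give you a direct sum of shifted IC sheaves by the decomposition theorem, not a single perverse sheaf with no summands on the boundary.

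The paper handles this step differently: it realizes $W_\rho$ as a direct summand of the pushforward of the constant sheaf along Laumon's map $\pi$ (the global Springer-type resolution), and then invokes the smallness of the relevant maps, exactly as in the proof of Theorem~4.1 of \cite{La}, to conclude that $T_0(W_\rho)$ is perverse and equal to the intermediate extension from the open stratum. You correctly flag the IC preservation as ``the main obstacle,'' but the resolution you propose is the incorrect smoothness claim rather than the Springer/smallness argument that is actually needed.
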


\begin{proof}
To calculate $T_0$ on $Coh_0^{n}$ we first calculate it on the open substack where the support of the torsion sheaf in $\Coh_0^{n}$ consists of $n$ distinct points. In that case the application of the Hecke functor has the effect of fixing one point in the support and letting the other points be arbitrary, so the Hecke functor for our sheaf reduces to the restriction from $S_n$ to $S_{n-1}$. That is we obtain that the result of 
the Hecke functor restricted to the open part with distinct points in the support is equal to the restriction of $L_{Res (\rho)}$ to the same open part.

To complete the calculation of the Hecke functor on $\Coh_0^{n}$ it remains to show that the result of
this Hecke functor is a perverse sheaf which is the intermediate extension of its restriction from the above open part. This is done as in the proof of Theorem 4.1 in \cite{La} using the
 relationship with the Springer resolution and the smallness of certain related maps. (Note that Laumon's proof works if we replace his local system $L$ by the trivial local system
thus obtaining our sheaf as a direct summand in the direct image of the constant sheaf by Laumon's map $\pi$; hence Laumon's proof works in our case to show that the result of
the Hecke functor is a perverse sheaf which is the intermediate extension of its restriction to the above open part).

To pass from $Coh_0$ to $Coh_1^{\prime}$ we use Lemma 4.2(i) in \cite{La} which says that the Hecke functor sends a pullback from $\Coh_0$ to $\Coh_1^{\prime}$ to a pullback from $\Coh_0$ to $\Coh_1^{\prime}$, which finishes the proof.
\end{proof}

\subsection{Hecke Functor on $\Coh_k^{\prime}$} \label{hecke_k}

Let $S$ be a Harder-Narasimhan stratum in $\Coh_k^{\prime,n}$. We will assume that the difference of the degrees of the two subquotients
of the greatest degrees in the Harder-Narasimhan filtration of a sheaf in $S$ is at least $2$ (note that only such sheaves will appear
in the statement of the Main Theorem). Let $S_{\rm new}$ be the Harder-Narasimhan
stratum in $\Coh_{k-1}^{\prime,n}$ such that the degrees of all but the largest one of its Harder-Narasimhan summands are the 
same as the degrees of the corresponding Harder-Narasimhan summands of $S$ and the degree of the largest Harder-Narasimhan 
summand of $S_{\rm new}$ is one less than the degree of the largest Harder-Narasimhan summand of $S$.

Recall the sheaf $\W_{S,\mu}$ on $\Coh_k^{\prime,n}$ which is described in the Introduction to this Chapter. In this subsection 
we will apply the Hecke functor to a sheaf of this form. Recall the substack $_{\rm inj}S$ of $S$ for which the composition of the section and the projection onto the Harder-Narasimhan summand of the smallest degree is injective. We will prove the following

\begin{proposition} \label{hecke_k_prop}
The result of the Hecke functor applied to $\W_{S,\mu}$ is supported on the closure of the HN stratum $S_{\rm new}$ and its restriction to the open substack $_{\rm inj}S_{\rm new}$ of $S_{\rm new}$ coincides with the restriction to $_{\rm inj}S_{\rm new}$ of the sheaf $\W_{S_{new},\mu}$.
\end{proposition}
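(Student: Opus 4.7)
The strategy is to stratify the Hecke correspondence $\J$ restricted over $S$ according to which Harder--Narasimhan summand of $M_k'$ is being modified, compute the pushforward on the generic stratum via an identification with $S_{\rm new}$, and control the contributions from the remaining strata through closure relations for HN strata.

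I fix the point $x \in C$ entering the definition of $\J$. For a point $(\Omega^{k-1} \to M_k \subset M_k')$ of $q^{-1}(S)$, the modification $M_k \subset M_k'$ at $x$ corresponds to the hyperplane $H \subset M_k'|_x$ which is the kernel of the surjection $M_k'|_x \to (M_k'/M_k)|_x$. Writing $F_1 \subset F_2 \subset \cdots \subset F_k = M_k'$ for the HN filtration of $M_k'$ with line quotients of degrees $d_1 > \cdots > d_k$, let $j(H)$ be the smallest index with $F_{j(H)}|_x \not\subset H$. A direct computation of the filtration $\{F_\bullet \cap M_k\}$ shows that the putative HN type of $M_k$ is $(d_1,\ldots,d_{j(H)-1},d_{j(H)}-1,d_{j(H)+1},\ldots,d_k)$, which is a legal HN type precisely when $d_{j(H)} \ge d_{j(H)+1}+2$. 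Under the hypothesis $d_1-d_2 \ge 2$, the case $j(H)=1$ is always admissible with resulting HN type equal to that of $S_{\rm new}$; for $j(H) \ge 2$, the putative HN type has strictly larger partial degree sums and so lies in $\overline{S_{\rm new}}$ by the standard closure relations for HN strata on a curve. Carrying out the analogous analysis with $M_k'$ in the boundary $\overline{{}_{\rm inj}S} \setminus {}_{\rm inj}S$, whose HN type is at least as unstable as that of $S$, shows that its image under $p$ also stays inside $\overline{S_{\rm new}}$, giving $\mathrm{supp}(T_k \W_{S,\mu}) \subset \overline{S_{\rm new}}$.

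To identify the restriction to ${}_{\rm inj}S_{\rm new}$, I focus on the open substack $\J_1 \subset \J$ defined by $j(H)=1$. Given $M_k \in {}_{\rm inj}S_{\rm new}$ with top HN summand $L_1'$ of degree $d_1-1$, the unique lift $M_k' \supset M_k$ at $x$ that lands in $S$ is the pushout $M_k \cup_{L_1'} L_1'(x)$ formed inside $M_k(x)$; this lift has $L_1'(x)$ as its top HN summand, leaves all lower HN summands of $M_k$ unchanged, and preserves the section $\Omega^{k-1} \to M_k$. Consequently $p$ restricts to an isomorphism $\J_1 \cap q^{-1}({}_{\rm inj}S) \to {}_{\rm inj}S_{\rm new}$; moreover, the smallest HN summand $L_k$ and its cokernel $L_k/\Omega^{k-1} \in \Coh_0^m$ agree for $M_k$ and $M_k'$, so the smallest-HN-quotient maps ${}_{\rm inj}S \to \Coh_0^m$ and ${}_{\rm inj}S_{\rm new} \to \Coh_0^m$ become identified through this isomorphism. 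Transporting the defining identity $\W_{S,\mu}|_{{}_{\rm inj}S} = (\text{pullback of } W_\mu \text{ from } \Coh_0^m)$ through the isomorphism then yields $T_k \W_{S,\mu}|_{{}_{\rm inj}S_{\rm new}} = \W_{S_{\rm new},\mu}|_{{}_{\rm inj}S_{\rm new}}$.

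The main technical obstacle I expect is the careful control of the behavior on $\overline{{}_{\rm inj}S} \setminus {}_{\rm inj}S$, where $\W_{S,\mu}$ is defined by IC extension and the smallest-HN-quotient map degenerates. To rule out spurious support outside $\overline{S_{\rm new}}$, and to ensure that no boundary contribution leaks into ${}_{\rm inj}S_{\rm new}$ under the pushforward, I will have to combine the perversity of the IC extension with the (semi)smallness of the restricted Hecke projections stratum by stratum, in the spirit of the argument of Laumon's Theorem 4.1 invoked in the $T_1$ calculation of Section~\ref{hecke_1}.
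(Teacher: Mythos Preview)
Your strategy matches the paper's almost exactly. The paper isolates as a lemma the fact that for $M_k \subset M_k'$ with $\deg M_k' - \deg M_k = 1$, the HN filtration of $M_k'$ agrees with that of $M_k$ except in one summand, which is an extension of $\O_x$ by the corresponding summand of $M_k$; this is precisely your $j(H)$ analysis, and both of you deduce the support statement from it. For the restriction to ${}_{\rm inj}S_{\rm new}$, both arguments rest on the observation that when only the top HN summand is modified the smallest one is left untouched, so the map to $\Coh_0^m$ (and hence the pullback of $W_\mu$) commutes through the correspondence.

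The one substantive difference is how the boundary $\overline{{}_{\rm inj}S}\setminus{}_{\rm inj}S$ is handled. You flag it as the main technical obstacle and propose a perversity/semismallness argument in the style of Laumon's Theorem~4.1. The paper avoids this entirely: from the same HN-modification lemma it concludes directly that
\[
q^{-1}\bigl(\overline{{}_{\rm inj}S}\bigr)\cap p^{-1}\bigl({}_{\rm inj}S_{\rm new}\bigr)
\;=\;
q^{-1}\bigl({}_{\rm inj}S\bigr)\cap p^{-1}\bigl({}_{\rm inj}S_{\rm new}\bigr),
\]
so over ${}_{\rm inj}S_{\rm new}$ there simply is no boundary contribution and no further argument is needed. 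This shortcut is worth adopting; your proposed route would work but is heavier than necessary. (A minor discrepancy: you describe the restricted $p$ as an isomorphism onto ${}_{\rm inj}S_{\rm new}$, whereas the paper says the fiber is the affine space $\Ext^1(\O_x,L)$ with $L$ the top HN summand; either description makes the pushforward immediate, so this is harmless.)
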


Before proving the proposition, we point out the following lemma which will be used in the proof of the proposition (the proof of the lemma is
straightforward).

\begin{lemma} Let $M_k \subset M_k^{\prime}$ be an inclusion of two sheaves of rank $k$ with the difference of degrees equal to $1$. The Harder-Narasimhan filtration of $M_k^{\prime}$ has all summands of the same degrees as the corresponding Harder-Narasimhan summands of $M_k$ except
one of them which is an extension of $\O_x$ by the corresponding Harder-Narasimhan summand of $M_k$. 
\end{lemma}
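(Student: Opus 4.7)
The plan is to construct the HN filtration of $M_k'$ explicitly from that of $M_k$, locating the unique ``slot'' in which the extension by $\O_x$ is absorbed. Write $0 = F_0 \subset F_1 \subset \cdots \subset F_k = M_k$ for the HN filtration of $M_k$, with line-bundle subquotients $L_i = F_i/F_{i-1}$ of strictly decreasing degrees $d_i$. The inclusion $M_k \hookrightarrow M_k'$ corresponds to an extension class $\eta \in \Ext^1(\O_x, M_k)$.

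I would first analyze $\eta$ via the long exact sequences attached to the HN filtration, using the vanishing of $\Ext^2(\O_x,-)$ on a curve, to filter $\Ext^1(\O_x, M_k)$ with successive subquotients $\Ext^1(\O_x, L_i)$. Let $j$ be the smallest index for which $\eta$ lifts to some $\eta_j \in \Ext^1(\O_x, F_j)$; equivalently, $j$ is the smallest index for which the pushed-out extension $0 \to M_k/F_j \to M_k'/F_j \to \O_x \to 0$ splits. This is the slot where the extension is absorbed.

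Next I would define a candidate filtration of $M_k'$ by $F_i' = F_i$ for $i < j$, $F_j' \subset M_k'$ the extension of $\O_x$ by $F_j$ classified by $\eta_j$, and $F_i' \subset M_k'$ for $i > j$ the preimage of $F_i/F_j$ under the identification $M_k'/F_j' \cong M_k/F_j$ provided by the splitting from the previous step. The subquotients of this filtration are $L_i$ for $i \neq j$, together with $L_j' := F_j'/F_{j-1}'$, which is a line bundle fitting into $0 \to L_j \to L_j' \to \O_x \to 0$ of degree $d_j + 1$.

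Finally I would verify the HN property: each subquotient is a line bundle, hence automatically semistable, and the new degree sequence $d_1 > \cdots > d_{j-1} > d_j + 1 > d_{j+1} > \cdots > d_k$ is strictly decreasing precisely because of the subsection's standing HN-gap-at-least-$2$ assumption (the inequalities past $j$ being automatic from $d_j > d_{j+1}$). Uniqueness of the HN filtration then identifies the constructed filtration with the HN filtration of $M_k'$. The only point requiring care is the well-definedness of $j$ and the identification $M_k'/F_j' \cong M_k/F_j$, both immediate from the Ext long exact sequence — which is presumably why the author describes the proof as straightforward.
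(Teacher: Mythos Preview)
The paper does not actually prove this lemma; it merely remarks parenthetically that the proof is straightforward. Your argument is a correct and careful fleshing-out of what the author presumably had in mind: filter $\Ext^1(\O_x,M_k)$ by the images of the $\Ext^1(\O_x,F_i)$ (injectivity coming from $\Hom(\O_x,M_k/F_i)=0$ since $M_k/F_i$ is torsion-free), locate the unique slot $j$, build the candidate filtration of $M_k'$, and check that the slope sequence remains strictly decreasing.

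Two small points are worth tightening. First, you invoke a ``standing HN-gap-at-least-$2$ assumption'' for all consecutive subquotients, but the subsection only imposes this on the two subquotients of \emph{greatest} degree. For arbitrary $j>1$ your inequality $d_{j-1}>d_j+1$ need not hold, and in fact the lemma as literally stated can fail there (the HN filtration of $M_k'$ may merge adjacent steps). This is harmless for the paper, since in the proof of Proposition~\ref{hecke_k_prop} only the case $j=1$ is used, where $d_1+1>d_2$ is automatic. Second, you assert that $L_j'$ is a line bundle without saying why the extension $0\to L_j\to L_j'\to\O_x\to 0$ is non-split; this follows from the minimality of $j$ (a split would produce a lift of $\eta$ to $\Ext^1(\O_x,F_{j-1})$), and similarly $\eta=0$ gives the degenerate case $M_k'=M_k\oplus\O_x$ with a torsion summand. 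With these caveats your proof is complete and matches the intended ``straightforward'' argument.
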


Now we prove the proposition.

\begin{proof}[Proof of Proposition \ref{hecke_k_prop}]

The statement about the support of the result of the Hecke functor follows from the above lemma. We will now prove the statement about the restriction to $_{\rm inj}S_{\rm new}$. Recall the maps $q: \J \to \Coh_k^{\prime, n}$ and $p: \J \to \Coh_k^{\prime, n-1}$ and the maps $t_S:\ _{\rm inj}S \to \Coh_0$ and $t_{S_{\rm new}}: \ _{\rm inj}S_{\rm new} \to \Coh_0$. 

Note that by the lemma above $q^{-1}(\overline {_{\rm inj}S}) \cap p^{-1}(_{\rm inj}S_{\rm new})$ coincides with $q^{-1}(_{\rm inj}S) \cap p^{-1}(_{\rm inj}S_{\rm new})$ where $\overline {_{\rm inj}S}$ denotes the closure of $_{\rm inj}S$. We denote these equal stacks by $\J_S$. Let $q_S, p_S$ be the restrictions of the 
maps $p,q$ to $\J_S$. Note that $t_S \circ q_S$ coincides with $t_S \circ p_S$ (so in particular the sheaf $(t_S \circ q_S)^*\W_{S,\mu}$ is constant on the fibers of the map $p_S$). This fact combined with the fact that the fiber over $M_k$ of the map $p_S$ is the affine space $\Ext^1(\O_x, L)$ (where $L$ is the HN summand of the greatest degree in $M_k$) shows 
that the Hecke functor applied to $\W_{S,\mu}$ gives $\W_{S_{\rm new},\mu}$ when restricted to $_{\rm inj}S_{\rm new}$.

\end{proof}

\section{Commutation of the DL construction with the Hecke functors up to degenerate summands} \label{hecke_commute}

The proof is modelled on the proof in \cite{La}, page 17. The proof that the Fourier transforms in the DL construction commute with the Hecke functors is exactly as in \cite{La}. The proof in \cite{La} that the restrictions under the maps $i_k$ commute with the Hecke functors goes through after a slight modification due to a slight difference in the definition of the DL construction for a curve of genus $0$ and a curve of positive genus. In this subsection we will show that these facts automatically imply that the intermediate extensions under the maps $i_k$ in the DL construction commute with the Hecke functors up to degenerate summands, in the sense of the proposition below. 


Let $L_k: D^b(\Coh^{\prime}_1) \to D^b(\Coh^{\prime}_{k+1})$ denote the first k steps of the Laumon construction with intermediate extensions.
Let $T_1: D^b(\Coh_1^{\prime,m+1}) \to D^b(\Coh_1^{\prime,m})$ be the Hecke functor.
Let $T_{k+1}: D^b(\Coh_{k+1}^{\prime,d+1}) \to D^b(\Coh_{k+1}^{\prime,d})$ be the Hecke functor (where $d = m - k(k+1)$).
Let $R_k$ be the result of $k$ steps of the inverse Laumon construction defined for $\mathcal F_{k+1}$ on $\Coh^{\prime}_{k+1}$ inductively by $R_0(F_{k+1}) = F_{k+1}$ and $R_{s+1} (\mathcal F) = \Phi_{k-s}^{-1}( i_{k-s+1}^* R_s(\mathcal F_{k+1}))$.

\begin{proposition} 
Given an object $A$ in $D^b(Coh'_1)$, each irreducible subquotient $C$
in each perverse cohomology of $T_{k+1}(L_k(A))$ is either $L_k(B)$ where $B$ is 
an irreducible subquotient in one of the perverse cohomology sheaves of the 
object $T_1(A)$ or is degenerate (i.e. is killed by the inverse Laumon construction
$R_k$).
\end{proposition}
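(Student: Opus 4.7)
The plan is to argue by induction on $k$, leveraging the two commutation properties already established in this section: the Hecke functor $T$ commutes with each Fourier transform $\Phi_k$ and with each open restriction $i_{k+1}^*$. The base case $k=0$ is immediate since $L_0 = \mathrm{id}$. For the inductive step, write $j = i_{k+1}$ for short. Applying $j^*$ to $T_{k+1}(L_k(A))$, using the identity $j^* \circ (i_{k+1})_{!*} = \mathrm{id}$ together with the two commutation properties, yields
\[
j^*\bigl(T_{k+1}(L_k(A))\bigr) \;\cong\; \Phi_k\bigl(T^{\mathrm{lower}}(L_{k-1}(A))\bigr),
\]
where $T^{\mathrm{lower}}$ denotes the Hecke functor acting at the previous level of the construction. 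Since $j^*$ is $t$-exact for the open embedding $j$ and $\Phi_k$ is an equivalence preserving the perverse $t$-structure, this identity descends termwise to perverse cohomology sheaves.

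Next I would classify each irreducible subquotient $C$ of a perverse cohomology sheaf of $T_{k+1}(L_k(A))$ according to whether $j^*C$ vanishes. If $j^*C = 0$, then $R_1(C) = \Phi_k^{-1}(j^*C) = 0$, and the recursive definition of $R_k$ immediately yields $R_k(C) = 0$, so $C$ is degenerate. If $j^*C \neq 0$, then $j^*C$ is an irreducible perverse subquotient of the right-hand side of the display, hence equal to $\Phi_k D$ for some irreducible subquotient $D$ of a perverse cohomology sheaf of $T^{\mathrm{lower}}(L_{k-1}(A))$. The inductive hypothesis then splits into two subcases: either $D = L_{k-1}(B)$ for some irreducible subquotient $B$ of a perverse cohomology sheaf of $T_1(A)$, in which case $j^*C = \Phi_k L_{k-1}(B)$ and the irreducibility of $C$ forces $C = (i_{k+1})_{!*}\Phi_k L_{k-1}(B) = L_k(B)$; or $D$ is degenerate for the lower-level inverse construction, in which case $R_1(C) = D$ and the remaining inverse steps reduce $R_k(C)$ to $R_{k-1}^{\mathrm{lower}}(D) = 0$, so that $C$ is again degenerate.

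The main obstacle I expect is the careful bookkeeping of the perverse $t$-structure and irreducibility through the recursive construction. One must ensure that $L_{k-1}(B)$ is itself an irreducible perverse sheaf whenever $B$ is, so that $L_k(B) = (i_{k+1})_{!*}\Phi_k L_{k-1}(B)$ is unambiguously defined, and one must verify that the passage from an irreducible subquotient $C$ on the full stack to its $j^*$-restriction either kills $C$ entirely or preserves irreducibility. Both facts are standard consequences of the behavior of $i_{!*}$, $\Phi$, and open restriction on perverse sheaves, but they need to be assembled uniformly across the recursive levels, matching composition factors on the full stack with their open restrictions.
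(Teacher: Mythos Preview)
Your proposal is correct and uses the same ingredients as the paper: $t$-exactness of open restriction and the Fourier transform, commutation of the Hecke functors with both, and the relationship between $L_k$ and $R_k$ on irreducible perverse sheaves. The only difference is packaging: you proceed by explicit induction on $k$, peeling off one layer $(i_{k+1}^*,\Phi_k)$ at a time, whereas the paper applies $R_k$ to $C$ in one stroke and uses the global identity $R_k(T_{k+1}(L_k(A))) = T_1(R_k L_k(A)) = T_1(A)$ to conclude directly.
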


\begin{proof}
We use the fact that both the restriction to an open substack and 
the Fourier transform are t-exact with respect to the perverse t-structure.
In this way we show that the inverse Laumon construction either kills $C$ or
maps it to one of the irreducible subquotients of a perverse cohomology sheaf 
of $R_k(T_{k+1}(L_k(A)))$. But the inverse Laumon construction commutes with the Hecke functors so $R_k(T_{k+1}(L_k(A))) = T_1(R_k L_k (A))$ by the remark above about the commutation of the Hecke functors with the Fourier transforms and the restrictions under the maps $i_k$ in the DL construction. Then we use the fact that the forward Laumon construction with intermediate extensions followed by the inverse Laumon construction gives the identity so $T_1(R_k L_k (A))=T_1(A)$ so we are done.
\end{proof}

\section{Nondegeneracy of sheaves supported on a HN stratum with subquotients of rank 1}  \label{nondeg}

We show that the IC sheaf of the preimage of a HN stratum with subquotients of rank $1$ which occurs in the Main Theorem is Whittaker non-degenerate. For this it is enough to show that the singular support of this sheaf intersects non-trivially the common open substack of the cotangent stacks of the stacks of sheaves with sections which is described on page 44 of \cite{La}. In turn for this it is enough to exhibit a coherent sheaf with a section $\Omega^{n-1} \hookrightarrow M_n$ in the open substack $_{\rm inj} S$ of the Harder-Narasimhan stratum $S$ described in the statement of the Main Theorem and a Higgs field $\theta: M_n \to M_n \otimes \Omega$ in the conormal bundle to the HN stratum such that their combination is in the common open substack of sheaves with sections decribed on p. 44 of \cite{La}. Below we reproduce use the diagram on page 44 of \cite{La} describing the common open substack of the stacks of sheaves with sections. Note that in this diagram the rank of $M_i$ is $i$, the horizontal sequences are short exact, and the diagram commutes.    


\begin{tikzpicture}
  \matrix (m) [matrix of math nodes,row sep=1em,column sep=8em,minimum width=2em, ampersand replacement=\&]
  {
     \Omega^{n-1} \& \ \ \  \  \  M_n \  \  \ \  \  \&  \ \  \  \ M_{n-1} \\
     \Omega^{n-1} \& M_{n-1} \otimes \Omega \& M_{n-2} \otimes \Omega \\
\vdots \& \vdots \& \vdots \\
\Omega^{n-1} \& M_1 \otimes \Omega^{n-1} \& M_0 \otimes \Omega^{n-1} \\};
  \path[-stealth]

    (m-1-1) edge node [right] {$=$} (m-2-1)
    (m-2-1) edge node [right] {$=$} (m-3-1)
    (m-3-1) edge node [right] {$=$} (m-4-1)

    (m-1-2) edge node [right] {\scriptsize{$\alpha_n$}} (m-2-2)
    (m-2-2) edge node [right] {\scriptsize{$\alpha_{n-1} \otimes \Omega$}} (m-3-2)
    (m-3-2) edge node [right] {\scriptsize{$\alpha_2 \otimes \Omega^{n-2}$}} (m-4-2)

    (m-1-3) edge node [right] {\scriptsize{$\alpha_{n-1}$}} (m-2-3)
    (m-2-3) edge node [right] {\scriptsize{$\alpha_{n-2} \otimes \Omega$}} (m-3-3)
    (m-3-3) edge node [right] {\scriptsize{$\alpha_1 \otimes \Omega^{n-2}$}} (m-4-3)
          
    (m-1-2) edge node [above] {\scriptsize{$\beta_n$}} (m-1-3)
    (m-2-2) edge node [above] {\scriptsize{$\beta_{n-1} \otimes \Omega$}} (m-2-3)
    (m-4-2) edge node [above] {\scriptsize{$\beta_1 \otimes \Omega^{n-1}$}} (m-4-3);


    \path[right hook->] 

     (m-1-1) edge node [above] {\scriptsize{$s$}} (m-1-2)

     (m-2-1) edge node [above] {\scriptsize{$s \otimes \Omega$}} (m-2-2)

     (m-4-1) edge node [above] {\scriptsize{$s \otimes \Omega^{n-1}$}} (m-4-2);

\end{tikzpicture}

\begin{lemma} If a section $s: \Omega^{n-1} \hookrightarrow M_n$ remains injective when composed with the quotient map to the smallest HN summand of $M_n$ then we can construct a diagram as above with this $s$.
\end{lemma}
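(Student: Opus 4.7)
The plan is to build the diagram inductively, row by row, from the top. Set $M_{n-1} := \operatorname{coker}(s)$, filling row~$1$ with the short exact sequence $0 \to \Omega^{n-1} \to M_n \to M_{n-1} \to 0$. Let $F_{n-1} \subset M_n$ denote the rank-$(n-1)$ HN subsheaf and $L_n := M_n/F_{n-1}$ the smallest HN summand. The hypothesis that $\Omega^{n-1} \xrightarrow{s} M_n \twoheadrightarrow L_n$ is injective forces $s(\Omega^{n-1}) \cap F_{n-1} = 0$ inside $M_n$, so $F_{n-1}$ injects into $M_{n-1}$ with torsion cokernel $L_n/s(\Omega^{n-1})$, and the smallest HN subquotient of $M_{n-1}$ is still $L_{n-1}$.

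The core of the construction is the choice of the vertical map $\alpha_n: M_n \to M_{n-1} \otimes \Omega$. Its restriction along $s$, after untwisting by $\Omega$, yields the new section $\sigma_{n-1}: \Omega^{n-2} \hookrightarrow M_{n-1}$ for row~$2$, and I take $M_{n-2} := \operatorname{coker}(\sigma_{n-1})$. The commutativity of the right square then uniquely determines $\alpha_{n-1}$: the composition $(\beta_{n-1} \otimes \Omega) \circ \alpha_n$ vanishes on $s(\Omega^{n-1}) = \ker \beta_n$ (since $\sigma_{n-1} \otimes \Omega$ lands in $\ker(\beta_{n-1} \otimes \Omega)$), so it factors through $\beta_n$ to produce $\alpha_{n-1}$. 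Iterating this down to $M_0$ (a torsion sheaf) produces the full diagram, provided that the new section at each stage remains injective on the smallest HN summand of the current cokernel so that the next iteration is legitimate.

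The main obstacle is thus choosing $\alpha_n$, and at subsequent stages $\alpha_{n-1}, \alpha_{n-2}, \ldots$, so that the resulting sections $\sigma_j: \Omega^{j-1} \hookrightarrow M_j$ all satisfy the injectivity-on-smallest-HN-summand property. A generic element of $\operatorname{Hom}(M_n, M_{n-1} \otimes \Omega)$ will suffice provided this Hom space is rich enough; concretely, at each stage $j$ one needs a nonzero map $\Omega^{j-1} \to L_j$ that lifts through the surjection $F_j \twoheadrightarrow L_j$ to a section of $F_j \hookrightarrow M_j$. Running the obstruction calculus up the HN filtration $F_1 \subset F_2 \subset \cdots \subset F_{j-1}$ reduces the lifting problem to the vanishing of $H^1(L_i \otimes \Omega^{-(j-1)})$ for $i < j$, and each such $H^1$ vanishes by Serre duality once $\deg L_i > j(2g-2)$. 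This last inequality is implied by the Main Theorem's separation hypotheses $n_i > n_{i+1} + (2g-2)$ iterated down to $n_k > m + (2g-2)$, so the required lifts exist at every stage and the inductive construction runs to completion.
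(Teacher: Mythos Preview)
Your approach diverges from the paper's in a way that misses the point of the lemma. The ``diagram as above'' is Laumon's description of the common open substack of the \emph{cotangent} stacks, so a point of it is a pair $(s,\theta)$ with $\theta:M_n\to M_n\otimes\Omega$ a Higgs field; the $\alpha_i$ are not freely chosen but are built from this fixed $\theta$ (indeed $\alpha_n=(\beta_n\otimes\Omega)\circ\theta$, and the later $\alpha$'s are induced). The whole purpose of the lemma, per the paragraph preceding it, is to exhibit such a pair with $\theta$ lying in the \emph{conormal bundle} to the HN stratum. The paper therefore begins by writing down a specific $\theta$ --- the ``principal nilpotent'' that kills the largest HN summand and shifts each other summand to the next larger one tensored by $\Omega$ --- checks explicitly that it lies in the conormal, and then proves by a long diagram chase (decreasing induction on $j$, using the Snake Lemma repeatedly and the HN filtration of $M_n$ at the end) that the resulting compositions $\alpha^j$ are injective. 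Your proposal to pick each $\alpha_n$ generically from $\Hom(M_n,M_{n-1}\otimes\Omega)$ ignores the conormal constraint entirely; even if it produced a valid diagram, it would not establish what the section needs.

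There are also internal gaps. You assert that ``the smallest HN subquotient of $M_{n-1}$ is still $L_{n-1}$,'' but $M_{n-1}$ is an extension of the torsion sheaf $L_n/s(\Omega^{n-1})$ by $F_{n-1}$, and its HN filtration is not the one inherited from $F_{n-1}$; the torsion typically gets absorbed and shifts degrees. Your inductive scheme depends on tracking the HN structure of each successive $M_j$, which you do not do. The lifting paragraph is also garbled: you write ``lifts through the surjection $F_j\twoheadrightarrow L_j$,'' but $F_j$ is a subsheaf and $L_j$ a quotient of $M_j$, so no such surjection exists with your notation. The paper avoids these difficulties precisely by never analysing the HN structure of the intermediate $M_j$'s: the single explicit $\theta$ and the original HN filtration $K_1\subset\cdots\subset K_n=M_n$ carry the whole argument.
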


\begin{proof}
Let $\theta: M_n \to M_n \otimes \Omega$ be a nilpotent of order $n$ which has the kernel equal to the greatest HN summand and maps each of the other HN summands to the one which is greater than it tensored by $\Omega$ (note that our HN strata have the difference at least $2$ between the degrees of the HN summands because when we build the HN stratum from a Young diagram we subtract from each row of the Young diagram $2$ less than from the next smaller row). 

We show that the Higgs field $\theta$ is in the conormal bundle to the HN stratum $S$ in the Main Theorem. Since the HN stratum consisting of sheaves of the form $\oplus_i A_i$ is the image of $\prod_i {\rm Pic}^{{\rm deg} A_i}$, the tangent space to this HN stratum is $\oplus \Ext^1(A_i,A_i)$. Since the Higgs field $\theta$ maps $A_i$ to $A_{i-1} \otimes \Omega$, in other words lies in $\oplus_i \Hom(A_i,A_{i-1} \otimes \Omega)$, which pairs to $0$ with $\oplus_i \Ext^1(A_i,A_i)$ by Serre duality, we get that $\theta$ is in the conormal bundle of the HN stratum $S$.

Given a section $s$ we can construct the diagram below by induction on $r$ defining the map $\beta_{n-r}$ as the quotient map of $M_{n-r}$ by $\alpha_{n-r+1} \otimes \Omega^{-1} ({\rm Ker}(\beta_{n-r+1} \otimes \Omega^{-1}))$, and the map $\alpha_{n-r}$ as the map induced by $\alpha_{n-r}$ on the corresponding quotients. This inductive construction is possible provided that the maps $\alpha^j = (\alpha_{j+1} \otimes \Omega^{n-j})(\alpha_{j+2} \otimes \Omega^{n-j-1}) \dots (\alpha_n \otimes \Omega)(s \otimes \Omega)$ are injective for every $j$. We will prove this by decreasing induction on $j$, the base being that $s \otimes \Omega$ is injective. Suppose that we proved the injectivity of $\alpha^{j^{\prime}}$ for all $j^{\prime}>j$. We want to prove that $\alpha^j$ is injective.

Assume that on the contrary $\alpha^j$ is not injective. We will formally manipulate the diagram as a diagram of modules over a ring using the Freyd-Mitchell embedding theorem. Therefore there is a nonzero $a$ such that $\alpha^j(a)=0$. By the inductive assumption we have $\alpha^{j+1}(a) \neq 0$ and $(\alpha_{j+1} \otimes \Omega^{n-j}) \alpha^{j+1} = 0$, hence 
\begin{equation}  \label{base}
\alpha^{j+1}(a) \in {\rm Ker}(\alpha^{j+1} \otimes \Omega^{n-j}).
\end{equation}

Let $\theta^i = (\theta \otimes \Omega^{n-j-1})(\theta \otimes \Omega^{n-j-2}) \dots (\theta \otimes \Omega^{n-i-1})$ and let $\beta^i = (\beta_{n+j-i} \otimes \Omega^{n-j})(\beta_{n+j-i+1} \otimes \Omega^{n-j}) \dots (\beta_n \otimes \Omega^{n-j})$. We are going to prove the following claim by decreasing induction on $i$:

Claim. There are elements $a_2, a_3, ... , a_{n - i - 1}$ such that the element
$$\beta^i \circ \theta^{n-2} \circ (s \otimes \Omega)(a) + \beta^i \circ \theta^{n-3} \circ (s \otimes \Omega^2)(a_2) + \dots + \beta^i \circ \theta^i \circ (s \otimes \Omega^{n-i-1})(a_{n-i-1})$$
is in ${\rm Ker}(\alpha_{n+j-i-1} \otimes \Omega^{n-j})$.
 
We are proving this claim by decreasing induction on $i$ and the base of induction for $i = n-2$ is provided by the statement (\ref{base}) above since $\alpha^{j+1}(a) = \beta^{n-2} \circ \theta^{n-2} \circ (s \otimes \Omega)(a)$ and according to (\ref{base}) this element is in ${\rm Ker}(\alpha_{j+1} \otimes \Omega^{n-j})$.

\newpage

\begin{sidewaysfigure}

\hspace*{-1cm}\vspace*{-10cm}\begin{tikzpicture}

\tikzstyle{every node}=[font=\scriptsize]

  \matrix (m) [matrix of math nodes,row sep=3.5em,column sep=1.5em,minimum width=0.5em, ampersand replacement=\&]
  {
     \ \& \  \&  \  \& \ \& \ \& \ \&  \ \& \Omega^{n-1} \& M_n \\
     \ \& \  \&  \  \& \ \& \ \& \ \&  \Omega^n \& M_n \otimes \Omega \& M_{n-1} \otimes \Omega  \\
     \ \& \  \&  \  \& \ \& \ \& \vdots \&  \vdots \& \vdots \& \vdots  \\
     \ \& \  \&  \  \& \ \& \Omega^{2n-i-2} \& M_n \otimes \Omega^{n-i-1} \& \dots \& M_{i+2} \otimes \Omega^{n-i-1} \& M_{i+1} \otimes \Omega^{n-i-1} \\
     \ \& \  \&  \  \& \Omega^{2n-i-1} \& M_n \otimes \Omega^{n-i} \& M_{n-1} \otimes \Omega^{n-i} \& \dots \& M_{i+1} \otimes \Omega^{n-i} \& M_i \otimes \Omega^{n-i} \\
     \ \& \ \&  \vdots \& \vdots \& \vdots \& \vdots \& \vdots \& \vdots \& \vdots \\
     \ \& \Omega^{2n-j-1} \& M_n \otimes \Omega^{n-j}  \& \dots \& M_{n+j-i} \otimes \Omega^{n-j} \& M_{n+j-i-1} \otimes \Omega^{n-j} \& \dots \& M_{j+1} \otimes \Omega^{n-j} \& M_j \otimes \Omega^{n-j} \\
     \Omega^{2n-j} \& M_n \otimes \Omega^{n-j+1} \& M_{n-1} \otimes \Omega^{n-j+1} \&  \dots \& M_{n+j-i+1} \otimes \Omega^{n+j-i} \& M_{n+j-i} \otimes \Omega^{n-j+1} \& \dots \& M_j \otimes \Omega^{n-j+1} \& M_{j-1} \otimes \Omega^{n-j+1} \\};
  \path[-stealth]

     (m-2-8) edge node [below] {\tiny{$\beta_n \otimes \Omega$}} (m-2-9)

     (m-4-6) edge node [below] {\tiny{$\beta_n \otimes \Omega^{n-i-1}$}} (m-4-7)
     (m-4-7) edge  (m-4-8)
     (m-4-8) edge node [below] {\tiny{$\beta_{i+2} \otimes \Omega^{n-i-1}$}} (m-4-9)

     (m-5-5) edge node [below] {\tiny{$\beta_n \otimes \Omega^{n-i}$}} (m-5-6)
     (m-5-6) edge  (m-5-7)
     (m-5-7) edge  (m-5-8)
     (m-5-8) edge node [below] {\tiny{$\beta_{i+1} \otimes \Omega^{n-i}$}} (m-5-9)

     (m-7-3) edge  (m-7-4)
     (m-7-4) edge  (m-7-5)
     (m-8-2) edge  (m-8-3)
     (m-8-3) edge  (m-8-4)
     (m-8-4) edge  (m-8-5)
     (m-7-3) edge node [right] {\tiny{$\alpha_n \otimes \Omega^{n-j}$}} (m-8-3)
     (m-7-6) edge  (m-7-7)
     (m-7-7) edge  (m-7-8)
     (m-7-8) edge node [below] {\tiny{$\beta_{j+1} \otimes \Omega^{n-j}$}}  (m-7-9)
     (m-8-6) edge  (m-8-7)
     (m-8-7) edge  (m-8-8)
     (m-8-8) edge node [below] {\tiny{$\beta_j \otimes \Omega^{n-j+1}$}} (m-8-9)
     (m-7-8) edge node [right] {\tiny{$\alpha_{j+1} \otimes \Omega^{n-j}$}} (m-8-8)
     (m-7-9) edge  (m-8-9)

     (m-7-5) edge node [right] {\tiny{$\alpha_{n-i+1} \otimes \Omega^{n-j}$}} (m-8-5)
     (m-7-6) edge node [right] {\tiny{$\alpha_{n-i} \otimes \Omega^{n-j}$}} (m-8-6)
     (m-7-5) edge node [below] {\tiny{$\beta_{n+j-i} \otimes \Omega^{n-j}$}} (m-7-6)
     (m-8-5) edge node [below] {\tiny{$\beta_{n+j-i+1} \otimes \Omega^{n-j+1}$}} (m-8-6)



     (m-1-9) edge node [right] {\tiny{$\alpha_n$}} (m-2-9)
     (m-4-6) edge node [right] {\tiny{$\alpha_n \otimes \Omega^{n-i-1}$}} (m-5-6)
     (m-4-8) edge node [right] {\tiny{$\alpha_{i+2} \otimes \Omega^{n-i-1}$}} (m-5-8)
     (m-4-9) edge (m-5-9)

     (m-1-9) edge node [left] {\tiny{$\theta$}} (m-2-8)
     (m-4-6) edge node [left] {\tiny{$\theta \otimes \Omega^{n-i-1}$}} (m-5-5)
     (m-7-3) edge node [left] {\tiny{$\theta \otimes \Omega^{n-j}$}} (m-8-2);

    \path[right hook->] 

     (m-1-8) edge node [below] {\tiny{$s$}} (m-1-9)
     (m-2-7) edge node [below] {\tiny{$s \otimes \Omega$}} (m-2-8)
     (m-4-5) edge node [below] {\tiny{$s \otimes \Omega^{n-i-1}$}} (m-4-6)
     (m-5-4) edge node [below] {\tiny{$s \otimes \Omega^{n-i}$}} (m-5-5)
     (m-7-2) edge node [below] {\tiny{$s \otimes \Omega^{n-j}$}} (m-7-3)
     (m-8-1) edge node [below] {\tiny{$s \otimes \Omega^{n-j+1}$}} (m-8-2);

\end{tikzpicture}

\end{sidewaysfigure}

\newpage

Now we assume that the Claim holds for $i$ and prove it for $i-1$. The Claim for $i$ tells us that 
$(\beta_{n+j-i} \otimes \Omega^{n-j})(\beta^{i-1} \circ \theta^{n-2} \circ (s \otimes \Omega)(a) + \beta^{i-1} \circ \theta^{n-3} \circ (s \otimes \Omega^2)(a_2) + \dots + \beta^{i-1} \circ \theta^i \circ (s \otimes \Omega^{n-i-1})(a_{n-i-1}))$
is in ${\rm Ker}(\alpha_{n+j-i-1} \otimes \Omega^{n-j})$.

By the Snake Lemma 
${\rm Ker}(\alpha_{n+j-i-1} \otimes \Omega^{n-j}) = (\beta_{n+j-i} \otimes \Omega^{n-j})({\rm Ker}(\alpha_{n+j-i} \otimes \Omega^{n-j})).$
Therefore we have that the element 
$\beta^{i-1} \circ \theta^{n-2} \circ (s \otimes \Omega)(a) + \beta^{i-1} \circ \theta^{n-3} \circ (s \otimes \Omega^2)(a_2) + \dots + \beta^{i-1} \circ \theta^i \circ (s \otimes \Omega^{n-i-1})(a_{n-i-1})$
is in ${\rm Ker}(\alpha_{n+j-i} \otimes \Omega^{n-j}) + {\rm Ker}(\beta_{n+j-i} \otimes \Omega^{n-j})$.

By successive applications of the Snake Lemma and the indutive hypothesis in $j$ about the injectivity of the previous maps we obtain that
$(\alpha_{n+j-i+1} \otimes \Omega^{n-j-1})({\rm Ker}(\beta_{n+j-i-1} \otimes \Omega^{n-j-1})) = {\rm Ker}(\beta_{n+j-i} \otimes \Omega^{n-j})$, ... , 
$(\alpha_n \otimes \Omega^{n-i})(\rm Ker(\beta_n \otimes \Omega^{n-i})) = {\rm Ker}(\beta_{n-1} \otimes \Omega^{n-i+1})$.

Combined these equations give us that $\beta^{i-1} \circ \theta^{i-1} \circ (s \otimes \Omega^{n-i})({\rm Ker}(\beta_n \otimes \Omega^{n-i}))$ 
is in ${\rm Ker}(\beta_{n+j-i} \otimes \Omega^{n-j})$.
Hence there is an element $a_{n-i}$ such that 
$\beta^{i-1} \circ \theta^{n-2} \circ (s \otimes \Omega)(a) + \beta^{i-1} \circ \theta^{n-3} \circ (s \otimes \Omega^2)(a_2) + \dots + \beta^{i-1} \circ \theta^i \circ (s \otimes \Omega^{n-i-1})(a_{n-i-1}) + \beta^{i-1} \circ \theta^{i-1} \circ (s \otimes \Omega^{n-i})(a_{n-i})$
is in ${\rm Ker}(\alpha_{n+j-i} \otimes \Omega^{n-j})$. Therefore we proved the Claim by decreasing induction on $i$.

Note that at the last stage of this induction we obtain the following statement:
$\theta^{n-2} \circ (s \otimes \Omega)(a) + \theta^{n-3} \circ (s \otimes \Omega^2)(a_2) + \dots + \theta^j \circ (s \otimes \Omega^{n-j-1})(a_{n-j-1})$
is in ${\rm Ker}(\alpha_n \otimes \Omega^{n-j}) + {\rm Ker}(\beta_n \otimes \Omega^{n-j})$.

So there is an element $a_{n-j}$ such that 
$$\theta^{n-2} \circ (s \otimes \Omega)(a) + \theta^{n-3} \circ (s \otimes \Omega^2)(a_2) + \dots + \theta^j \circ (s \otimes \Omega^{n-j-1})(a_{n-j-1}) + (s \otimes \Omega^{n-j})(a_{n-j})$$
is in ${\rm Ker}(\alpha_n \otimes \Omega^{n-j})$.

Therefore
$$(\theta \otimes \Omega^{n-j})(\theta^{n-2} \circ (s \otimes \Omega)(a) + \theta^{n-3} \circ (s \otimes \Omega^2)(a_2) + \dots + \theta^j \circ (s \otimes \Omega^{n-j-1})(a_{n-j-1}) + (s \otimes \Omega^{n-j})(a_{n-j}))$$
is in ${\rm Ker}(\beta_n \otimes \Omega^{n-j+1})$.

Hence there is an element $a_{n-j+1}$ such that  
$(\theta \otimes \Omega^{n-j})(\theta^{n-2} \circ (s \otimes \Omega)(a) + \theta^{n-3} \circ (s \otimes \Omega^2)(a_2) + \dots + \theta^j \circ (s \otimes \Omega^{n-j-1})(a_{n-j-1}) + (s \otimes \Omega^{n-j})(a_{n-j})) + (s \otimes \Omega^{n-j+1})(a_{n-j+1}) = 0.$

Let $(\theta \otimes \Omega^{n-j}) \circ \theta^k = \widetilde{\theta}^k$ for $k = j , j+1 , ... , n-3 , n-2$. Also let $\theta \otimes \Omega^{n-j} = \widetilde{\theta}^{j-1}$ and $\widetilde{\theta}^{j-2} = id$. From the previous equation we obtain 
\begin{equation} \label{tilde}
\widetilde{\theta}^{n-2} \circ (s \otimes \Omega)(a) + \widetilde{\theta}^{n-3} \circ (s \otimes \Omega^2)(a_2) + \dots + \widetilde{\theta}^j \circ (s \otimes \Omega^{n-j-1})(a_{n-j-1}) 
\end{equation}
$$
+ \widetilde{\theta}^{j-1}(s \otimes \Omega^{n-j})(a_{n-j}) + \widetilde{\theta}^{j-2}(s \otimes \Omega^{n-j+1})(a_{n-j+1})=0.
$$

Let $a_1 = a$. Let $p$ be the greatest number in the set $\{1,2,...,n-j+1\}$ such that $a_p \neq 0$ (note that $p$ exists because $a_1 = a \neq 0$).

Let $K_1 \subset K_2 \subset \dots \subset K_n =M_n$ be the HN filtration of $M_n$. We have for $q<p$ that 
$\widetilde{\theta}^{n-q-1}(s \otimes \Omega^q)(a_q) \in K_q \otimes \Omega^{n-j+1} \subseteq K_{p-1} \otimes \Omega^{n-j+1}.$ 

However the induced map $\bar{\widetilde{\theta}}^{n-p-1}: K_n/K_{n-1} \to K_p/K_{p-1} \otimes \Omega^{n-j+1}$ is injective. Therefore since $a_p \neq 0$ and $s \otimes \Omega^p$ is injective when composed with the quotient map from $K_n \otimes \Omega^{n-j+1}$ to $K_n \otimes \Omega^{n-j+1}/K_{n-1} \otimes \Omega^{n-j+1}$ we have that $\bar{\widetilde{\theta}}^{n-p-1}(s \otimes \Omega^p)(a_p) \neq 0$ in $K_p \otimes \Omega^{n-j+1}/K_{p-1} \otimes \Omega^{n-j+1}$.

Therefore $\bar{\widetilde{\theta}}^{n-p-1}(s \otimes \Omega^p)(a_p)$ is not in $K_{p-1} \otimes \Omega^{n-j+1}$. This contradicts equation (\ref{tilde}) above. Therefore $\alpha^j: \Omega^{n+1} \to M_j \otimes \Omega^{n-j+1}$ is injective and our induction on $j$ is completed.

\end{proof}

It remains to show that we can also satisfy the condition $\Hom(M_k,\Omega^k)=0$ in the above diagram for all $k$. To do this it is enough to consider sheaves with sections in our HN stratum where the section maps injectively to the smallest HN summand and by zero to the other summands. In this case we can compute all $M_k$ in the diagram on p. 44 in \cite{La} explicitly and check that $\Hom(M_k,\Omega^k)=0$; note that the $M_k$ in this example will have torsion but this is allowed.  

\section{Support of the Drinfeld-Laumon Construction applied to $\W_\rho$}

Consider the stack $F=F_k$ the points of which are given by the data of $(\Omega^{k-1} \to X_k \to X_{k-1} \to ... \to X_1)$ and $L^{(k-i+1)} \to X_i$ for all $i$ such that 
the compositions $L^{(k-i+1)} \to X_i \to X_{i-1}$ are zero and the degree of $L^{(k-i)}$
is $n_{k-i}-2(k-i-1)$ where $n_{k-i}$ is the length of the corresponding row of 
the Young diagram. We consider the direct
image from this stack to the stack $Coh_k^{\prime}$ of $\Omega^{k-1} \to X_k$ of the sheaf which is the pull-back 
(via the map which sends the above point to $X_1/\Omega^{k-1}$) from $Coh_0$ 
(of the sheaf corresponding to the representation of the symmetric group obtained by deleting the
 largest $k-1$ rows of the original Young diagram) 
to an open substack (where this map is well-defined and smooth)  and extended by the 
Goresky-MacPherson extension.

Let $F_0$ be the open substack of $F$ the points of which are the data of $(\Omega^{k-1} \to X_k \to X_{k-1} \to ... \to X_1)$ and $L^{(k-i+1)} \to X_i$ for all $i$ such that the sequences $L^{(k-i+1)} \to X_i \to X_{i-1}$ are short exact.

\begin{lemma} \label{supportlemma} The result of the $k$-th step in the Drinfeld-Laumon construction applied to the sheaf corresponding
to our Young diagram $\mu$ is a direct summand in $\pi_* i_{!*} p^*(S_{\mu^{\prime}})$ in the following diagram,
where $\mu^{\prime}$ is the Young diagram obtained from $\mu$ by deleting the $k-1$ longest rows and $S_{\mu^{\prime}}$
is the corresponding sheaf on $Coh_1^{\prime}$.

\begin{tikzpicture}
  \matrix (m) [matrix of math nodes,row sep=2em,column sep=2em,minimum width=2em, ampersand replacement=\&]
  {
     Coh_1^{\prime} \& F_0 \& F \& Coh_k^{\prime}\\};
  \path[-stealth]
          
    (m-1-2) edge node [above] {$p$} (m-1-1)

     (m-1-3) edge node [above] {$\pi$} (m-1-4);

    \path[right hook->] 

     (m-1-2) edge node [above] {$i$} (m-1-3);

\end{tikzpicture}

\end{lemma}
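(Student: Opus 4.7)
The plan is to proceed by induction on $k$, unwinding one step of the DL construction at each stage. The base case $k=1$ is essentially tautological: $F_1$ collapses to (an open substack of) $\Coh_1^\prime$, the open substack $F_0$ is the locus of injective sections, and the statement reduces to the definition of $\W_\mu$ as the IC extension of the pull-back from $\Coh_0$. For the inductive step, assume the result for $k-1$, so that the $(k-1)$-st DL iterate is a direct summand of $\pi^{(k-1)}_* i^{(k-1)}_{!*} (p^{(k-1)})^* S_{\mu''}$, with $\mu''$ obtained from $\mu$ by deleting the $k-2$ longest rows and $F_{k-1}$ parameterizing the corresponding shorter tower.

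I would then execute the $k$-th step, namely $\Phi_{k-1}$ followed by $(i_k)_{!*}$. The key geometric input is that the Fourier-Deligne transform $\Phi_{k-1}$ along the dual vector bundles $\pi_{k-1}$ and $\pi_{k-1}^\vee$, applied to the inductive expression and followed by intermediate extension, can be realized up to a direct summand as a geometric pullback-pushforward along the universal correspondence parameterizing pairs $(\Omega^{k-1} \hookrightarrow X_k \twoheadrightarrow X_{k-1},\; L^{(1)} \to X_{k-1})$ with vanishing composition to $X_{k-2}$, where $L^{(1)}$ has the degree prescribed in the definition of $F_k$. Concatenating this new layer onto $F_{k-1}$ produces exactly $F_k$; the short-exact locus in each stratum becomes $F_0$; and the forgetful map to $\Coh_k^\prime$ becomes $\pi$. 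Since intermediate extension is transitive along chains of open immersions, the iterated $(i_j)_{!*}$ from each DL step collapse into the single $i_{!*}$ from $F_0$ to $F$ in the statement.

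The main obstacle is the passage from the Fourier-Deligne transform, defined as an equivalence via an exponential kernel, to a purely geometric pullback-pushforward along the correspondence: these two operations agree on the generic locus but differ by contributions supported on strata where the correspondence data degenerates. It is precisely this discrepancy that forces the conclusion to be stated as a direct summand rather than an equality. To isolate the correct summand, I would invoke the decomposition theorem for the proper map $\pi$ applied to the semisimple perverse sheaf $i_{!*} p^* S_{\mu'}$, and then match generic behaviour on $F_0$ using the inductive hypothesis together with the Hecke-functor compatibilities and commutation results established in Sections 3 and 4. The technically delicate step is to verify that no other summand coming from the decomposition theorem can mimic the generic behaviour of the DL iterate on $F_0$, for which the nondegeneracy properties shown in Section 5 will also be needed.
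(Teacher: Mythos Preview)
Your inductive skeleton matches the paper's, but the heart of the argument is missing. The paper does \emph{not} try to realize the Fourier--Deligne transform as a geometric pull-push along a correspondence up to error terms. Instead, over the open base $B_0$ (where all the sequences $L^{(k-i+1)}\to X_i\to X_{i-1}$ are short exact), the fiber of $U_0\to B_0$ is identified with $\Hom(\Omega^{k-1},X_1)$, and the fiberwise Fourier transform of the pulled-back sheaf $p_0^*S_{\mu'}$ is computed \emph{exactly}: it equals $f_*(p')^*S_{\mu_1}$, where $\mu_1$ is obtained from $\mu'$ by deleting its longest row. This is precisely the Main Theorem of \cite{Do} applied in families over $B_0$, and it is the single substantive input to the induction step. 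You never invoke \cite{Do}, and your ``agree on the generic locus'' claim is both vague and, absent that input, unjustified.

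Relatedly, you misdiagnose the source of the phrase ``direct summand'' in the statement. It does not arise from a discrepancy between the Fourier kernel and a correspondence; it comes from the proper pushforward $\pi_*$ via the decomposition theorem: after the exact fiberwise computation above and the intermediate extension from $B_0$ to $B$, one lands inside $\pi'_* i'_{!*}(p')^*S_{\mu_1}$, and the DL iterate sits there as one summand among possibly several. Consequently your plan to isolate the correct summand using the Hecke compatibilities of Sections~3--4 and the nondegeneracy of Section~5 is unnecessary here (and in fact reverses the logical order of the paper: those tools are deployed only later, in Section~7, \emph{after} Lemma~\ref{supportlemma} has pinned down the support). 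Drop that detour, supply the families version of the \cite{Do} computation over $B_0$, and the induction closes directly.
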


\begin{proof}

Let the stack $B$ have points given by the data of $L^{(k-i+1)} \to X_i$ and $X_i \to X_{i-1}$ for all $i$ 
such that the compositions $L^{(k-i+1)} \to X_i \to X_{i-1}$ are all zero. Let the open substack $B_0$ of $B$
be given by the data above such that $L^{(k-i+1)} \to X_i \to X_{i-1}$ are all short exact sequences.
Let $U_0$ be the vector bundle over $B_0$ with fiber $\Hom(\Omega^k, X_1)$. We have the following diagram

\begin{tikzpicture}
  \matrix (m) [matrix of math nodes,row sep=2em,column sep=2em,minimum width=2em, ampersand replacement=\&]
  {
     Coh_1^{\prime} \& U_0 \& F_0 \& F \\
     \ \& \ \& B_0 \& B \\  };
  \path[-stealth]
          
    (m-1-2) edge node [above] {$p^0$} (m-1-1)
                 edge (m-2-3)

     (m-1-3) edge (m-1-2)
                  edge (m-2-3)

     (m-1-4) edge (m-2-4);

    \path[right hook->] 

     (m-1-3) edge (m-1-4)

     (m-2-3) edge (m-2-4);

\end{tikzpicture}

Assuming by induction that the result of the $k$-th step of the Drinfeld-Laumon construction is 
$\pi_* i_{!*} p^*(S_{\mu^{\prime}})$, we will prove an analogous statement for the $(k+1)$-th step;
note that the base of induction is furnished by the Main Theorem of Chapter 1.
For this we consider the following dual diagram:

\begin{tikzpicture}
  \matrix (m) [matrix of math nodes,row sep=2em,column sep=2em,minimum width=2em, ampersand replacement=\&]
  {
     \ \& \overline{U^{*,new}_0} \& \overline{U_0^*} \& \ \& \ \\
     Coh_1^{\prime} \& U^{*,new}_0 \& U_0^* \& F_0^* \& F^* \\
     \ \& \ \& \ \&  B_0 \& B \\  };
  \path[-stealth]
          
    (m-2-2) edge node [above] {$p^{\prime}$} (m-2-1)
                 edge node [above] {$f$} (m-2-3)

     (m-1-2) edge (m-1-3)

     (m-2-4) edge (m-3-4)

     (m-2-2) edge (m-3-4)

     (m-2-3) edge (m-3-4)

     (m-2-5) edge (m-3-5);

    \path[right hook->] 

     (m-1-3) edge (m-2-5)

     (m-2-2) edge (m-1-2)

     (m-2-3) edge (m-1-3)

     (m-2-3) edge (m-2-4)

     (m-2-4) edge (m-2-5)

     (m-3-4) edge (m-3-5);

\end{tikzpicture}

In this diagram $F^*$ is the dual bundle of the bundle $F$ over $B$; $F_0^*$ and $U_0^*$
are the dual bundles of the bundles $F_0$ and $U_0$ over $B_0$. Moreover, $U^{*,new}_0$
is obtained by adding to the data of $U^*_0$ of a map $L^{(k)} \to X_2$. Likewise, $\overline{U^{*,new}_0}$
is obtained by adding the data of a map $L^{(k)} \to X_2$ to the data of $\overline{U_0^*}$.

Note that when we perform the Fourier-Deligne transform in the bundle $U_0$ over $B_0$
of the sheaf $p_0^* (S_{\mu})$ we obtain the sheaf $f_* (p^{\prime})^* (S_{\mu_1})$ where
$\mu_1$ is obtained by deleting the longest row of $\mu$.
This follows by using the first step of the argument, the Main Theorem of Chapter 2, in families over $B_0$.

Let $\pi^{\vee}: F^* \to \  ^0 Coh_{k+1}^{\prime}$ be the map which sends the point 
given by $L^{(i)} \to X_{k+1-i}$ and $\Omega^k \hookrightarrow X_{k+1}^{\prime} \to X_k$
to $\Omega^k \hookrightarrow X_{k+1}^{\prime}$. Note that performing the Fourier transform
over the base $B$ can be replaced by performing it over the base $B_0$ followed by an intermediate extension. 
We obtain that the Fourier transform of the result at the $k$-th step above is the sheaf 
$\pi^{\prime}_*  i^{\prime}_{!*} (p^{\prime})^* (S_{\mu_1})$ 
for the following maps in the above diagram
(followed by the composition of the map $\overline{U^{*,new}_0} \hookrightarrow F^*$ of the 
above diagram and the map $ \pi^{\vee}: F^* \to Coh_{k+1}^{\prime,0} $):

\begin{tikzpicture}
  \matrix (m) [matrix of math nodes,row sep=2em,column sep=2em,minimum width=2em, ampersand replacement=\&]
  {
     Coh_1^{\prime} \& U^{*,new}_0 \& \overline{U^{*,new}_0} \& Coh_{k+1}^{\prime, 0}\\};
  \path[-stealth]
          
    (m-1-2) edge node [above] {$p^{\prime}$} (m-1-1)

     (m-1-3) edge node [above] {$\pi^{\prime}$} (m-1-4);

    \path[right hook->] 

     (m-1-2) edge node [above] {$i^{\prime}$} (m-1-3);

\end{tikzpicture}

After an intermediate extension these maps will be replaced by the maps in the diagram

\begin{tikzpicture}
  \matrix (m) [matrix of math nodes,row sep=2em,column sep=2em,minimum width=2em, ampersand replacement=\&]
  {
     Coh_1^{\prime} \& F_{k+1,0} \& F_{k+1} \& Coh_{k+1}^{\prime}\\};
  \path[-stealth]
          
    (m-1-2) edge node [above] {$p_{k+1}$} (m-1-1)

     (m-1-3) edge node [above] {$\pi_{k+1}$} (m-1-4);

    \path[right hook->] 

     (m-1-2) edge node [above] {$i$} (m-1-3);

\end{tikzpicture}

hence we proved the desired statement by induction on $k$.

\end{proof}

From the lemma above the following is immediate:

\begin{proposition} \label{remark} For any Young diagram with rows of lengths $n_1 > n_2 + (2g-2) > ... > n_k + (2g-2) > ...$
where $n_k > n_{k+1} + n_{k+2} + ... + (2g-2)$,
the $k$-th step of the Drinfeld-Laumon construction applied to the sheaf corresponding to this Young diagram
is supported on the closure of the locus of bundles with section such that the bundle has a filtration with
subquotients of rank 1 and degrees $n_i +(i-1)(2g-2)$ and $k(2g-2) +n_{k+1}+n_{k+2}+...$.
\end{proposition}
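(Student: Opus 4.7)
The plan is to deduce the proposition directly from Lemma \ref{supportlemma}, so the main task is bookkeeping rather than fresh geometry. By that lemma, the output of the $k$-th step of the Drinfeld-Laumon construction applied to $\W_\rho$ is a direct summand of $\pi_* i_{!*} p^*(S_{\mu'})$, where $\pi: F \to \Coh_{k+1}^{\prime}$ and the IC extension $i_{!*}$ is supported on $\overline{F_0} \subseteq F$. Hence the support of this pushforward lies in $\overline{\pi(F_0)} \subseteq \Coh_{k+1}^{\prime}$, and it remains to identify this image with the closure of the HN stratum named in the proposition.

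A point of $F$ carries a descending filtration $X_k \twoheadrightarrow X_{k-1} \twoheadrightarrow \cdots \twoheadrightarrow X_1$ with rank-one successive kernels, together with line bundles $L^{(k-i+1)} \to X_i$ whose composition with $X_i \to X_{i-1}$ vanishes. The vanishing forces $L^{(k-i+1)}$ to factor through $\ker(X_i \to X_{i-1})$; on $F_0$ this factorization is an isomorphism onto the kernel, and the sequences become short exact. Thus $\pi(F_0)$ parameterizes sheaves with section whose underlying bundle admits a filtration by line bundles of the prescribed degrees $n_i + (i-1)(2g-2)$ (the shifts by $(2g-2)$ being built into the recursive definition of $F$), together with one further rank-one quotient coming from the $\Coh_0$-datum pulled back along $p$.

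This final subquotient corresponds to the Young diagram $\mu'$ obtained from $\rho$ by deleting its top $k$ rows, and its degree is pinned down by balancing the total degree $n + k(k+1)(g-1)$ of the ambient rank-$(k+1)$ bundle against the degrees $\sum_{i=1}^{k} (n_i + (i-1)(2g-2))$ of the upper line subquotients: a direct count shows the leftover is exactly $k(2g-2) + n_{k+1} + n_{k+2} + \cdots$, as asserted. I expect no conceptual obstacle beyond this bookkeeping; the nontrivial geometric content, namely the factorization of the iterated DL construction through the stack $F$, is already accomplished by Lemma \ref{supportlemma}, and the proposition is essentially a moduli-theoretic rereading of that statement.
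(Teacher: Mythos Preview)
Your approach is exactly the paper's: the paper's entire proof is the single sentence ``From the lemma above the following is immediate,'' and your proposal simply spells out the bookkeeping that makes this immediacy explicit. The only discrepancy is cosmetic indexing (you write $\pi: F \to \Coh_{k+1}'$ and delete $k$ rows, whereas Lemma~\ref{supportlemma} as stated has $\pi: F \to \Coh_k'$ and deletes $k-1$ rows), but this is the same off-by-one that already floats between the lemma and the proposition in the paper itself and carries no mathematical content.
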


\section{Induction on the Young diagram}

\begin{lemma} \label{indlemma} Let $D$ be a Young diagram such that the length of its longest row is much greater than the sum of the lengths of the rest of the rows. Let $D^{\prime}$ be the Young diagram obtained from $D$ by deleting the longest row of $D$. Suppose that we know that the result of $k$ steps of the Drinfeld-Laumon construction applied to the sheaf $\W_{D^{\prime}}$ on $\Coh_1^{\prime}$ is the sheaf $\W_{S^{\prime},D_{\rm red}}$ on the corresponding Harder-Narasimhan stratum in $\Coh_k^{\prime}$ as in the statement of the Main Theorem. Then the result of $k+1$ steps of the Drinfeld-Laumon construction applied to $\W_D$ is as in the statement of the Main Theorem, that is equal to the sheaf $\W_{S,D_{\rm red}}$ where $S$ has the same summands as $S^{\prime}$ plus an additional summand coming from the longest row of $D$.
\end{lemma}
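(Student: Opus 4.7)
The plan is to assemble the results from Sections \ref{hecke_k}, \ref{hecke_commute}, \ref{nondeg}, together with Proposition \ref{remark} and Lemma \ref{supportlemma}, and to feed in the inductive hypothesis via a family version of the final step of the DL construction. Throughout write $\F := L_{k+1}(\W_D)$.

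First I would invoke Proposition \ref{remark} to conclude that $\F$ is supported on the closure $\overline{S}$ of the Harder-Narasimhan stratum $S$ appearing in the Main Theorem, and note that $\F$ is perverse since every step of the DL construction ends with an intermediate extension. Next I would compute the restriction $\F|_{_{\rm inj}S}$. By Lemma \ref{supportlemma}, $\F$ is a direct summand of $\pi_* i_{!*} p^*(\W_{D_{\rm red}})$ built from the tower-stack diagram $\Coh_1^{\prime}\leftarrow F_0 \hookrightarrow F \rightarrow \Coh_{k+1}^{\prime}$. Over $_{\rm inj}S$ the fibres of $\pi$ parametrise the additional tower data coming from the longest row of $D$, and I would apply the inductive hypothesis $L_k(\W_{D^{\prime}}) = \W_{S^{\prime},D_{\rm red}}$ in families over the relevant Picard component to identify the portion of this diagram coming from the remaining $k$ rows. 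Assembling the pieces shows that $\F|_{_{\rm inj}S}$ is the pullback of $\W_{D_{\rm red}}$ along the natural smooth map $_{\rm inj}S \to \Coh_0^m$, which is exactly $\W_{S,D_{\rm red}}|_{_{\rm inj}S}$ by construction.

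To upgrade this restriction-level agreement to a global identification, I would rule out additional summands of $\F$ using the Whittaker nondegeneracy of Section \ref{nondeg} together with the commutation of DL with Hecke functors from Section \ref{hecke_commute}. Any extra summand $C$ of $\F$ must be supported on a proper closed substack of $\overline{S}$, since the generic fibre has already been accounted for. Applying iterated Hecke functors $T_{k+1}$ and then the inverse Laumon construction $R_{k+1}$ would, on the one hand, produce via Proposition \ref{hecke_k_prop} and the commutation a sheaf computable from $\W_D$ by Hecke reduction, and on the other hand would either annihilate $C$ (if degenerate) or transport $C$ contradictorily away from the image pinned down by the inductive hypothesis on the Hecke-reduced side. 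The nondegeneracy of $\W_{S,D_{\rm red}}$ itself ensures that this sheaf is preserved under the inverse DL, leaving $\F = \W_{S,D_{\rm red}}$.

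The hardest part will be the family argument used in the second paragraph: the inductive hypothesis describes $k$ steps of DL applied to $\W_{D^{\prime}}$ on $\Coh_1^{\prime}$, whereas $\F$ involves one more DL step on the enlarged diagram $D$. Bridging these requires encoding the final Fourier-Deligne transform and intermediate extension as a computation over the Picard component of the longest HN summand, so that the sheaf $\W_{S^{\prime},D_{\rm red}}$ produced fibrewise by the inductive hypothesis assembles with the line-bundle factor coming from the longest row of $D$ into $\W_{S,D_{\rm red}}$ on the nose.
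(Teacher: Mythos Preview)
Your overall strategy---pin down the support via Proposition~\ref{remark}, identify the sheaf on an open piece of the stratum, then invoke uniqueness of IC---matches the paper. But two things go wrong in the execution.

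First, the computation of $\F|_{_{\rm inj}S}$ is the entire content of the lemma, and you have not actually done it; you only say you would ``apply the inductive hypothesis in families over the relevant Picard component.'' The paper's argument here is concrete and different in character: it runs an \emph{inner} induction through the $k{+}1$ steps of the DL construction, tracking what happens on the split locus $\{L\oplus X\}$ at each stage. The key mechanism is the standard Fourier-transform trick: pullback along the inclusion of a sub-vector-bundle $\Hom(\Omega^{k-1},X)\hookrightarrow\Hom(\Omega^{k-1},L\oplus X)$ becomes, after Fourier transform, pushforward along $\Ext^1(X,\Omega^{k-1})\hookrightarrow\Ext^1(L\oplus X,\Omega^k)$; on the open locus where $L$ survives as a summand of the cokernel this pushforward is again a pullback, and uniqueness of the intermediate extension propagates the pullback description to the next step. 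The base case is the Main Theorem of \cite{Do}. Without this step-by-step Fourier argument (or a genuine replacement), Lemma~\ref{supportlemma} alone only tells you that $\F$ is a \emph{summand} of $\pi_*i_{!*}p^*(\W_{D_{\rm red}})$, which is not enough to identify $\F|_{_{\rm inj}S}$.

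Second, your third paragraph is unnecessary and misplaces the machinery of Sections~\ref{hecke_k}--\ref{nondeg}. The sheaf $\F=L_{k+1}(\W_D)$ is an \emph{irreducible} perverse sheaf: $\W_D$ is irreducible, Fourier--Deligne transform preserves irreducibility, and intermediate extension from an open substack preserves irreducibility. Hence once you know $\mathrm{supp}\,\F\subset\overline{S}$ and $\F|_{_{\rm inj}S}\cong\W_{S,D_{\rm red}}|_{_{\rm inj}S}$, there are no ``additional summands'' to rule out; uniqueness of IC finishes immediately. The Hecke/nondegeneracy tools are used in the paper only \emph{after} this lemma, in the separate induction on the Young diagram that reduces the general case to the ``long first row'' case covered here.
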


\begin{proof}

First we calculate the result on the set of $L \oplus X$ where $L$ has rank one and comes from the longest row of $D$ for $X$ of rank $1,2,...,k$.
We show by induction by using the Fourier transform trick that after each Fourier transform we can replace push-forward by pull-back, and we show 
that pull-back is preserved after an intermediate extension. 

The base of induction follows from the Main Theorem of \cite{Do}. Namely from the above result we obtain that on the set of bundles of the form 
$L \oplus X$ with an injective section, which is also injective when composed with the projection onto $X$, the sheaf is pulled back from the set of $X$ with an injective section. Since this sheaf coincides with the restriction to this open set of the sheaf on the set of bundles $L \oplus X$ with a section which is the pullback from the set of $X$ with a section of the corresponding sheaf $\W$ there, we obtain the base of induction by the uniqueness of the intermediate extension.

Assume that we have proved the result at the $k$-th step of the Drinfeld-Laumon construction. After the next Fourier transform, by the Fourier transform trick, the pullback from the bundle with fiber ${\rm Hom}(\Omega^{k-2}, X)$ to the bundle with fiber ${\rm Hom}(\Omega^{k-1}, L \oplus X)$ will turn into the pushforward from the bundle with fiber ${\rm Ext}^1(X, \Omega^{k-1})$ to the bundle with fiber ${\rm Ext}^1(L \oplus X, \Omega^k)$. Hence on the 
open subset of the stack of bundles with a section which consists of $\Omega^k \hookrightarrow L \oplus Y$, such that $L$ is a summand in the quotient (note that the above map along which we push forward is one-to-one on this set), the sheaf is the restriction of the sheaf which is the pullback from the set of $\Omega^k \to Y$ to the set of $\Omega^k \to L \oplus Y$. Hence we obtain the step of induction by the uniqueness of the intermediate extension, using the fact that after the $(k+1)$-th step of the DL construction the resulting sheaf is supported on the closure of the locus of sheaves of the form $L \oplus Y$ which follows from Lemma \ref{supportlemma}.

(Note that the set where the cokernel has a summand $L$ is open because its complement consists of bundles with a section where the cokernel has a summand which is a line bundle of degree which is higher than the degree of $L$ so $L$ must land there so the quotient of $Y$ by the section has torsion. But the set of injective sections of $Y$ in which cokernel has torsion is closed).

Next we apply the result about the support of the the Drinfeld-Laumon construction to sheaves $\W_D$ and we obtain that it must be supported on the closure of a Harder-Narasimhan stratum but by the above we already know it on an open substack of the stratum, so we are done.

\end{proof}




Now we use Lemma \ref{indlemma} to do the induction on the Young diagram. We will do a double induction: increasing on the number of blocks which are not in the longest row and decreasing on the number of blocks in the longest row. The base of induction is provided by the Main Theorem of \cite{Do}. Now to do the step of induction we assume that the Main Theorem of this chapter is true for all Young diagrams with either at most $m$ blocks that are not in the longest row or exactly $m$ blocks not in the longest row and at least $N+1$ blocks in the longest row. (Note that for Young diagrams with $m$ blocks not in the longest row and $>>m$ blocks in the longest row the statement follows from the statement for smaller $m$ by Lemma \ref{indlemma} since the diagram obtained from $D$ by throwing away the longest row has $<m$ blocks not in its first row, so for it the statement is known by the inductive assumption). 

Let us take a diagram $D$ with exactly $m$ blocks not in the longest row and $N$ blocks in the longest row and prove the Main Theorem for it. First add some blocks to the longest row of $D$ so that the length of the first row of the resulting diagram $D_1$ is $N+1>>m$. Let us prove the statement for the diagram $D_2$ which is obtained from $D_1$ by deleting one square in the longest row. We apply the Hecke functor to $\W_{D_1}$ and using the result of Section \ref{hecke_1} by Pieri's rule we obtain the direct sum of sheaves $\W_{D^{\prime}}$ for diagrams $D^{\prime}$ that are obtained from $D$ by deleting one square. Note that $D_2$ is one of such diagrams and all diagrams $D^{\prime}$ except $D_2$ have fewer than $m$ squares not in the longest row, so the inductive hypothesis applies to them. Hence we know the result of the Drinfeld-Laumon construction for $D_1$ and for all $D^{\prime}$ except $D_2$.

By the computation in Section \ref{hecke_k} we know that the Hecke functor applied to the result of the DL construction applied to $\W_D$ has a summand which is the conjectured answer for $D_2$. By the result of Section \ref{hecke_commute} this summand is either the result of the DL construction applied to one of the $\W_{D^{\prime}}$ or it is degenerate. But it cannot be degenerate by Section \ref{nondeg} and it cannot be the result of the DL construction applied to $D^{\prime}$ other than $D_2$ because they are supported on smaller HN strata (because the diagrams have greater longest rows that $D_2$). Hence it must be the DL construction applied to $\W_{D_2}$ and we are done with the induction step.

\end{document}